\documentclass[a4paper,11pt]{article}
\usepackage[T1]{fontenc}
\usepackage[utf8]{inputenc}
\usepackage{authblk}

\usepackage[caption=false]{subfig}

\usepackage{hyperref}
\usepackage{diagbox}
\usepackage{multirow}
\usepackage{booktabs}
\usepackage{ulem}
\usepackage{mathptmx}
\usepackage{amsthm,amsmath}
\usepackage{dsfont}
\usepackage{algorithm,algpseudocode}
\usepackage{color}
\usepackage{tikz}
\usepackage{pgfplots}
\usetikzlibrary{calc}
\usepackage{graphicx}
\usepackage{booktabs}
\usepackage{caption}
\usepackage{todonotes}

\newcommand{\R}{\mathds{R}}
\newcommand{\argmin}{\mathop{\mathrm{argmin}}}

\newtheorem{proposition}{Proposition}
\newtheorem{corollary}{Corollary}

\newtheorem{definition}{Definition}

\newtheorem{theorem}{Theorem}

\newcommand{\x}{{\mathbf{x}}}            

\title{A method for global minimization of nonconvex quadratic functions}

\author[1]{A.M. Bagirov}
\author[2]{V. Laha}
\author[3]{J.E. Martínez-Legaz}

\affil[1]{Centre for Smart Analytics, Institute of Innovation, Science and Sustainability, Federation University Australia, Ballarat, Australia}

\affil[2]{Department of Mathematics, Institute of Science, Banaaras Hindu University, Varanasi, 221005, Uttar Pradesh, India}

\affil[3]{Departament d’Economia i d’Historia Economica, Universitat Autonoma de Barcelona, Bellaterra, Spain}

\date{}

\begin{document}

\maketitle

\begin{abstract}
The problem of global minimization of nonconvex quadratic functions subject to box constraints is studied.  Applying Gershgorin theorem we reduce the main matrix $A$ to its diagonal form which is used to obtain difference of convex representation of the quadratic function. Then $\varepsilon$-subdifferentials of DC components are studied and used to design a method for minimizing the quadratic function globally.      
\end{abstract}

\noindent \textbf{Keywords:} Quadratic programming; Global optimization; Nonconvex functions; $\varepsilon$- Subdifferential.


\section{Introduction} \label{sec:Introduction} 
Consider the following \textit{nonconvex quadratic programming (QP) problem}:
\begin{align} \label{globprobQP1}
	\begin{cases}
		\text{minimize}\quad  & f(x)=\frac{1}{2}\langle Ax,x \rangle + \langle b,x \rangle + c\\
		\text{subject to}     & x \in [u,v]. \\
	\end{cases}
\end{align}
Here the matrix $A \in \R^{n\times n}$, $b, u, v \in \R^n,~u_i \leq v_i,~i=1,\ldots,n,~c \in \R$. 

Solving nonconvex quadratic programming problems to global optimality has been the subject of extensive research since the early 1990s. Local methods for constrained nonconvex quadratic programming were developed, for example, in \cite{Cuong2024,Pham2008,Ye1992}. 

In \cite{AlKhayyal1995}, the authors propose an algorithm for computing approximate global solutions to quadratically constrained quadratic programming problems. The approach integrates outer approximation with a branch-and-bound strategy based on linear programming relaxations. The paper \cite{Sherali1995}, introduces a Reformulation-Linearization/Convexification technique to address nonconvex quadratic programming problems subject to linear constraints. A rectangle branch-and-reduce approach for solving nonconvex QP problems is developed in \cite{Gao2005}.

Two approaches were proposed in \cite{Lethi2000} to solve the problem of minimizing a quadratic form subject to the intersection of finitely many ellipsoids where the first approach is based on the DCA algorithm and the second approach is a branch-and-bound scheme using Lagrangian duality for bounding and ellipsoidal bisection in branching. The paper \cite{Audet2000} presents a branch and cut algorithm that finds in a finite number of steps, a globally $\varepsilon$-optimal solution of the nonconvex quadratically constrained QP problem. A decomposition branch and bound method is proposed in \cite{Lethipham1997} to solve linearly constrained indefinite QP problems. The paper \cite{Bomze2004} analyzes DC decompositions for indefinite quadratic functions. 

Branch and bound schemes using a relaxation and different DC decompositions of the objective function, to solve nonconvex QP over a compact polyhedral feasible region is proposed in \cite{Cambini2005}. In \cite{Fampa2017}, a preprocessing stage is proposed to decompose the indefinite-quadratic function into a DC function, leading to a more efficient spatial branch-and-bound. A method to find global minimizers of a nonconvex quadratic function is proposed in \cite{Best1997}. The paper \cite{Fujie1997} applies the semidefinite programming relaxation to a nonconvex QP having a linear objective function and quadratic inequality constraints. The paper \cite{achour2024} proposes an Initialization-free DCA, which automatically calculates the optimal initial point for DCA, enabling efficient solutions to nonconvex quadratic problems with convex constraints.

This paper \cite{luo2023} proposes three algorithms for separable nonconvex QP with a quadratic and a box constraints. The first algorithm improves Lagrangian breakpoint search via a secant approach and the second algorithm uses quadratic convex approximations to converge to KKT points. The third algorithm integrates these methods with convex relaxation and a branch-and-bound framework to obtain global minimizer within a pre-specified $\varepsilon$-tolerance.

The paper \cite{zhao2023global} proposes a Convex Proximal Point Algorithm for  nonconvex QP problems with convex quadratic constraints and develops a special procedure to generate initial points. In \cite{taati2020local}, the authors propose an algorithm to solve nonconvex QP problems with a single quadratic constraint. SDP-based convex relaxation methods for QP problems are discussed in \cite{jiang2019semidefinite}. A branch-and-bound algorithm for solving quadratically constrained QP problems is proposed in \cite{lu2019sensitive}. The paper \cite{wan2019alternating} proposes an iterative algorithm that combines the Alternating Projection Algorithm with the rank-one approximation technique to solve nonconvex QP problems.

Paper \cite{bonami2019solving} introduces novel graph-structure-based cutting planes to tighten the linear relaxation of nonconvex quadratic programming (QP) problems with linear constraints. Leveraging the Motzkin-Straus theorem, a classical result that establishes a connection between graph theory and optimization, the authors employ linearization variables within a spatial branch-and-bound framework to represent bilinear product terms. Similarly,  \cite{bonami2018globally} develops a global optimization approach based on linear programming, incorporating cutting planes derived from the Boolean Quadric Polytope to solve nonconvex QP problems with box constraints. In \cite{lu2018dc}, two branch-and-bound algorithms founded on DC decompositions are proposed to obtain global solutions for box-constrained QP problems. Furthermore, \cite{ge2018accelerating} presents a novel linear relaxation technique based on variable transformation, which reformulates the original nonconvex QP problem into a sequence of computationally efficient linear relaxation problems, thereby generating valid lower bounds on the global optimum. This approach is further enhanced through the integration of region-elimination techniques and a branch-and-bound framework that progressively reduces the search space, improving computational efficiency and facilitating convergence to the global minimizer.

In this paper, we propose different approach to develop a method for globally minimizing indefinite quadratic functions. We apply the Gershgorin theorem to express the quadratic function as a difference of two convex quadratic functions. Then using $\varepsilon$-subdifferentials of DC components we design an hybrid method to minimize the quadratic function. In this hybrid method a local search is used to find stationary points and a global search is applied to escape from stationary points. We prove that after finite number of applications of a local search the method finds global minimizer of the quadratic function.

The paper is organized as follows. Section \ref{prelims} contains definitions and some results necessary for the rest of the paper. DC representation of quadratic functions is presented in Section \ref{diagonal}. $\varepsilon$-subdifferentials of convex quadratic functions are discussed in Section \ref{esubdif}. In Section \ref{method} the proposed method is described and its convergence is studied. Some concluding remarks are given in Section \ref{conclusions}.

\section{Preliminaries} \label{prelims}
In this section, we introduce the definitions and results that will be employed throughout the paper. 

Let $\varphi: \R^n \rightarrow \R$ be a convex function. Its subdifferential $\partial \varphi(x)$ at a point $x$ is:
\begin{align*}
 \partial \varphi(x)=\Big\{\xi \in \R^n:~\varphi(y) \geq \varphi(x)+\langle \xi, y-x\rangle, ~\forall y\in\R^n \Big\}
\end{align*} 
and for $\varepsilon \geq 0$ its $\varepsilon$-subdifferential at $x\in \R^n$ is
\begin{align*}
  \partial_\varepsilon \varphi(x)=\Big\{\xi \in \R^n:~\varphi(y) \geq \varphi(x)+\langle \xi, y-x\rangle-\varepsilon, ~\forall y\in\R^n \Big\}.
\end{align*}
A vector $\xi \in \partial \varphi(x)$ is called a subgradient, and a vector $\xi \in \partial_\varepsilon \varphi(x)$ is called a $\varepsilon$-subgradient of $\varphi$ in $x$. 

The directional derivative of the function $\varphi$ at $x\in \R^n$ with respect to a direction $d \in \R^n$ is defined as
$$
 \varphi'(x,d) = \lim_{t \downarrow 0} \frac{\varphi(x+td)-\varphi(x)}{t}
$$ 
if this limit exists. The directional derivative exists for convex functions $\varphi: \R^n \rightarrow \R$ \cite{bagibook2014} and 
\begin{align} \label{epsdir1}
 \varphi'(x,d) &= \inf_{t > 0}~~t^{-1} \Big[\varphi(x+td)-\varphi(x) \Big].
\end{align}
Furthermore,
\begin{equation} \label{epsdir2}
 \varphi'(x,d)=\max_{\xi \in \partial \varphi(x)}~~\langle \xi,d\rangle.
\end{equation}
The \textit{$\varepsilon$-directional derivative} of a convex function $\varphi$ at $x \in \R^n$ in the direction $d \in \R^n,~d \neq 0_n$, is given by 
\begin{equation} \label{epsdir3}
 \varphi'_\varepsilon(x,d)=\max_{\xi \in \partial_\varepsilon \varphi(x)}~~\langle \xi,d\rangle.
\end{equation}
\noindent It is known that \cite{bagibook2014}
\begin{align} \label{epsdir4}
 \varphi'_\varepsilon(x,d) &=\inf_{t > 0}~t^{-1}\Big[\varphi(x+td)-\varphi(x)+\varepsilon \Big].
\end{align}

Next we recall local and global optimality conditions in unconstrained DC optimization. Consider the DC function $\varphi(x) = \varphi_1(x) - \varphi_2(x)$ where functions $\varphi_1, \varphi_2: \R^n \rightarrow \R$ are convex. The point $x^*$ to be local minimizer of the function $\varphi$ it is necessary that $\partial \varphi_2(x^*) \subseteq \partial \varphi_1(x^*)$. Points satisfying this condition are called \textit{inf-stationary ($d$-stationary, strong critical) points}. In general, this condition is not easy to verify and therefore it is relaxed to the condition: $\partial \varphi_2(x^*) \cap \partial \varphi_1(x^*) \neq \emptyset$. Points satisfying this condition are called \textit{critical points}.

The following theorem on global optimality conditions was established in \cite{Hiriart1988}.
\begin{theorem} \label{hiriart}
 Let $\varphi_1, \varphi_2:\R^n\rightarrow\R$ be convex functions. For a point $x^* \in \R^n$ to be a global minimizer of a DC function $\varphi = \varphi_1 - \varphi_2$, it is necessary and sufficient that
 \begin{equation} \label{hiriart1}
  \partial_\varepsilon \varphi_2(x^*) \subseteq \partial_\varepsilon \varphi_1(x^*) \quad\text{for all}~\varepsilon \geq 0.
 \end{equation}
\end{theorem}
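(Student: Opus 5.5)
We prove the two directions separately. Throughout we use only the definition of the $\varepsilon$-subdifferential: $\xi\in\partial_\varepsilon\varphi_i(x^*)$ means $\varphi_i(y)\ge\varphi_i(x^*)+\langle\xi,y-x^*\rangle-\varepsilon$ for all $y$. A useful fact is that, since $\varphi_2$ is finite and convex on $\R^n$, $\partial\varphi_2(y)\neq\emptyset$ for every $y\in\R^n$. The whole proof is just a comparison of linear minorants of $\varphi_1$ and $\varphi_2$ anchored at $x^*$.

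\emph{Sufficiency.} Assume (\ref{hiriart1}) and fix an arbitrary $y\in\R^n$. Choose $\xi\in\partial\varphi_2(y)$. We want $\xi$ to be an $\varepsilon$-subgradient of $\varphi_2$ at $x^*$ for a suitable $\varepsilon$. Set
\[
\varepsilon:=\varphi_2(x^*)-\varphi_2(y)-\langle\xi,x^*-y\rangle ,
\]
which is nonnegative by the subgradient inequality at $y$. For any $z$ we have
\[
\varphi_2(z)\ge\varphi_2(y)+\langle\xi,z-y\rangle=\varphi_2(x^*)+\langle\xi,z-x^*\rangle-\varepsilon ,
\]
so $\xi\in\partial_\varepsilon\varphi_2(x^*)$. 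By (\ref{hiriart1}), $\xi\in\partial_\varepsilon\varphi_1(x^*)$. Evaluating that inequality at $z=y$ gives
\[
\varphi_1(y)\ge\varphi_1(x^*)+\langle\xi,y-x^*\rangle-\varepsilon=\varphi_1(x^*)+\varphi_2(y)-\varphi_2(x^*).
\]
Rearranging yields $\varphi(y)\ge\varphi(x^*)$. Since $y$ was arbitrary, $x^*$ is a global minimizer.

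\emph{Necessity.} Let $x^*$ be a global minimizer, $\varepsilon\ge0$, and $\xi\in\partial_\varepsilon\varphi_2(x^*)$. We use $\varphi_1(y)\ge\varphi_1(x^*)+\varphi_2(y)-\varphi_2(x^*)$ for all $y$, which is a restatement of global minimality. The $\varepsilon$-subgradient inequality for $\varphi_2$ gives $\varphi_2(y)-\varphi_2(x^*)\ge\langle\xi,y-x^*\rangle-\varepsilon$. Combining the two,
\[
\varphi_1(y)\ge\varphi_1(x^*)+\langle\xi,y-x^*\rangle-\varepsilon\quad\text{for all }y,
\]
that is, $\xi\in\partial_\varepsilon\varphi_1(x^*)$.

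The only non-mechanical step is in the sufficiency direction: choosing $\varepsilon$ as the linearization gap of $\varphi_2$ between $y$ and $x^*$, so that an exact subgradient at $y$ becomes an $\varepsilon$-subgradient at $x^*$. This needs $\partial\varphi_2(y)\neq\emptyset$, which holds because $\varphi_2$ is real-valued and convex on all of $\R^n$. This is also why the condition must be imposed for all $\varepsilon\ge0$ and not only for $\varepsilon=0$.
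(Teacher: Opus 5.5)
Your proof is correct in both directions. Necessity is global minimality combined with the $\varepsilon$-subgradient inequality for $\varphi_2$. For sufficiency, your choice $\varepsilon=\varphi_2(x^*)-\varphi_2(y)-\langle\xi,x^*-y\rangle\ge0$ with $\xi\in\partial\varphi_2(y)$ does place $\xi$ in $\partial_\varepsilon\varphi_2(x^*)$. The $\varepsilon$-subgradient inequality for $\varphi_1$ at $z=y$ then reduces exactly to $\varphi(y)\ge\varphi(x^*)$. The only extra fact you use, $\partial\varphi_2(y)\neq\emptyset$, holds for finite convex functions on $\R^n$. The paper itself gives no proof of this theorem; it quotes the result from \cite{Hiriart1988}, so there is no in-paper argument to compare yours against.

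For comparison, the result is often presented through the $\varepsilon$-directional derivatives recalled in \eqref{epsdir3}--\eqref{epsdir4}. By \eqref{epsdir3}, the inclusions for all $\varepsilon\ge0$ are equivalent to $(\varphi_2)'_\varepsilon(x^*,d)\le(\varphi_1)'_\varepsilon(x^*,d)$ for all $d$ and $\varepsilon$. One combines this with the identity $\varphi_i(x^*+d)-\varphi_i(x^*)=\sup_{\varepsilon\ge0}\,[(\varphi_i)'_\varepsilon(x^*,d)-\varepsilon]$. One half of that identity is \eqref{epsdir4} with $t=1$. The other half is proved by exactly your linearization-gap trick, so your argument is the same idea with this lemma inlined.

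Your version is shorter and self-contained. The directional version has the merit of showing the mechanism behind Theorem \ref{globopt1}. A failure of the inclusion gives some $d$ with $(\varphi_2)'_\varepsilon(x^*,d)>(\varphi_1)'_\varepsilon(x^*,d)$. Then \eqref{epsdir4}, applied to both components, yields a $t>0$ with $\varphi(x^*+td)<\varphi(x^*)$.
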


\paragraph{Gershgorin theorem (the symmetric case) \cite{G1931}.} 
Consider the symmetric matrix $A=(a_{ij},~i,j=1,\ldots ,n)$. For every
eigenvalue $\lambda \in \mathds{R},$ there exists $i\in \{1,\ldots ,n\}$
such that%
\begin{equation*}
|a_{ii}-\lambda |\leq \sum_{j}|a_{ij}|\equiv b_{i}.
\end{equation*}

\section{DC representation of quadratic functions} \label{diagonal}
In this section we express the following quadratic function as a difference of two convex quadratic functions:
$$
\varphi(x) = \frac{1}{2}\langle Ax,x \rangle
$$
where $A \in \R^{n \times n}$. 
\begin{proposition}
\label{DCquadratic}Let 
\begin{equation*}
d_{i}=\max \{b_{i}-a_{ii},0\},~i=1,\ldots ,n
\end{equation*}%
where $b_{i}$ is defined in \eqref{DCquadratic}. Then  the matrices%
\begin{equation*}
A_{1}=%
\begin{bmatrix}
a_{11}+d_{1} & a_{12} & \ldots  & a_{1n} \\ 
a_{21} & a_{22}+d_{2} & \ldots  & a_{2n} \\ 
\ldots  & \ldots  & \ldots  & \ldots  \\ 
a_{n1} & a_{2n} & \ldots  & a_{nn}+d_{n} \\ 
&  &  & 
\end{bmatrix}%
\end{equation*}%
and%
\begin{equation*}
A_{2}=%
\begin{bmatrix}
d_{1} & 0 & \ldots  & 0 \\ 
0 & d_{2} & \ldots  & 0 \\ 
\ldots  & \ldots  & \ldots  & \ldots  \\ 
0 & 0 & \ldots  & d_{n} \\ 
&  &  & 
\end{bmatrix}%
\end{equation*}

\noindent are positive semidefinite;\ hence, the function $\varphi $ can be represented
as DC as follows: $\varphi (x)=\varphi _{1}(x)-\varphi _{2}(x)$ where 
\begin{equation*}
\varphi _{1}(x)=\langle A_{1}x,x\rangle ,~\varphi _{2}(x)=\langle
A_{2}x,x\rangle .
\end{equation*}
\end{proposition}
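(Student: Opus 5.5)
The plan is to show each matrix has only nonnegative eigenvalues, using the Gershgorin theorem for $A_1$, and then to read off the DC decomposition from the identity $A = A_1 - A_2$. Throughout, $A$ is taken symmetric; this is implicit in the use of the symmetric Gershgorin theorem and is harmless. If $A$ is not symmetric, I replace it by $\frac12(A+A^T)$, which does not change $\langle Ax,x\rangle$. The quantity $b_i$ in the statement is the one from the Gershgorin theorem, $b_i=\sum_j|a_{ij}|$.

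\emph{Step 1: $A_2$.} By construction $d_i=\max\{b_i-a_{ii},0\}\ge 0$. Hence $A_2=\mathrm{diag}(d_1,\ldots,d_n)$ satisfies
$$\langle A_2x,x\rangle=\sum_i d_ix_i^2\ge 0,$$
so $A_2$ is positive semidefinite.

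\emph{Step 2: $A_1$.} The matrix $A_1=A+A_2$ is symmetric, so all its eigenvalues are real. Let $\lambda$ be one of them. Applying the Gershgorin theorem to $A_1$ gives an index $i$ with
$$|a_{ii}+d_i-\lambda|\le r_i:=\sum_{j\ne i}|a_{ij}|.$$
The off-diagonal entries of $A_1$ coincide with those of $A$, so $r_i$ is computed from $A$ alone. It follows that
$$\lambda\ge a_{ii}+d_i-r_i\ge a_{ii}+(b_i-a_{ii})-r_i=b_i-r_i\ge 0,$$
because $r_i\le b_i$. This holds whether $b_i$ is read as the full row sum, in which case $b_i-r_i=|a_{ii}|$, or as the off-diagonal radius, in which case $b_i=r_i$. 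Every eigenvalue of the symmetric matrix $A_1$ is therefore nonnegative, and $A_1$ is positive semidefinite.

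\emph{Step 3: the DC representation.} Since $A_1-A_2=A$, we get $\langle Ax,x\rangle=\langle A_1x,x\rangle-\langle A_2x,x\rangle$. Both quadratic forms on the right are convex because their matrices are positive semidefinite. The statement defines $\varphi_1,\varphi_2$ without the factor $\frac12$ that appears in $\varphi$, so I would either insert $\frac12$ in both $\varphi_1$ and $\varphi_2$, or equivalently note that the identity holds up to this common positive factor, which preserves convexity.

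The step needing the most care is Step 2. The delicate point is applying the Gershgorin theorem to $A_1$ rather than to $A$, and making sure the radius involved, $r_i$, is bounded by $b_i$ so that the choice of $d_i$ forces the Gershgorin disc into $[0,\infty)$. Everything else is routine bookkeeping.
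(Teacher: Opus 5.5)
Your proof is correct and follows the paper's route. In both, $A_2$ is positive semidefinite because $d_i\ge 0$, and for $A_1$ you apply Gershgorin to $A_1$ itself and use $d_i\ge b_i-a_{ii}$ to force every eigenvalue to be nonnegative.

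Your explicit use of the off-diagonal radius $r_i\le b_i$ is what actually justifies the paper's inequality $|a_{ii}+d_i-\lambda|\le b_i$. That inequality does not follow from the full-row-sum version of Gershgorin stated in the paper, because the row sum of $A_1$ includes $|a_{ii}+d_i|$. Your remarks about assuming symmetry and the missing factor $\frac12$ are also correct.
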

\begin{proof}
The matrix $A$ can be represented as $A=A_{1}-A_{2}.$ Since $d_{i}\geq
0,~i=1,\ldots ,n$ the matrix $A_{2}$ is positive definite. For matrix $A_{1}$
we notice that there exists $i$ such that 
\begin{equation*}
|a_{ii}+d_{i}-\lambda |\leq b_{i}
\end{equation*}%
or 
\begin{equation*}
a_{ii}+d_{i}-\lambda \leq b_{i}
\end{equation*}%
\begin{equation*}
\lambda \geq a_{ii}+d_{i}-\lambda \geq a_{ii}+b_{i}-a_{ii}-b_{i}=0.
\end{equation*}%
This completes the proof.
\end{proof}

\section{$\varepsilon$-subdifferentials of convex quadratic functions.} \label{esubdif}
This section is devoted to the study of $\varepsilon$-subdifferentials of convex quadratic functions. 

\paragraph{$\varepsilon$-subdifferentials.} First, we describe the $\varepsilon$-subdifferentials of the following quadratic function
\begin{align} \label{quadratic}
 f(x) = \frac{1}{2}\langle Ax,x \rangle + \langle b,x \rangle + c
\end{align}
where the matrix $A \in \R^{n \times n}$ is symmetric positive definite and invertible, $b \in \R^n,~c \in \R$. 

\begin{proposition} \label{epssub01}
	The $\varepsilon$-subdifferential of the function $f$, defined in \eqref{quadratic}, at $x \in \R^n$ is:
	\begin{align} \label{epsilonsub}
		\partial_\varepsilon \varphi(x)=\Big\{\xi \in \R^n:~\xi = Ax+b+y,~\langle A^{-1}y,y \rangle \leq 2\varepsilon \Big\}.
	\end{align}
	Here $A^{-1}$ is an inverse of the matrix $A$.
\end{proposition}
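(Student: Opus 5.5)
Since $f$ is differentiable with $\nabla f(x)=Ax+b$, every $\xi\in\R^n$ can be written uniquely as $\xi=Ax+b+y$ with $y=\xi-\nabla f(x)$. So it suffices to show that, for fixed $x$, the vector $\xi=Ax+b+y$ is an $\varepsilon$-subgradient of $f$ at $x$ if and only if $\langle A^{-1}y,y\rangle\le 2\varepsilon$. (The left-hand side of \eqref{epsilonsub} is written with $\varphi$, but it refers to the function $f$ of \eqref{quadratic}; we use the definition of $\partial_\varepsilon$ from Section \ref{prelims}.)

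First, write $z=w-x$ for an arbitrary point $w$. Because $f$ is quadratic, its Taylor expansion is exact:
$$
f(x+z)=f(x)+\langle Ax+b,z\rangle+\tfrac12\langle Az,z\rangle .
$$
Substituting $\xi=Ax+b+y$, the defining inequality $f(w)\ge f(x)+\langle \xi,w-x\rangle-\varepsilon$ for all $w$ becomes
$$
\tfrac12\langle Az,z\rangle-\langle y,z\rangle+\varepsilon\ \ge\ 0\qquad\text{for all } z\in\R^n.
$$
Thus $\xi\in\partial_\varepsilon f(x)$ if and only if $\min_{z}\big(\tfrac12\langle Az,z\rangle-\langle y,z\rangle\big)\ge-\varepsilon$.

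Next, compute this minimum. Since $A$ is symmetric positive definite, the function $z\mapsto \tfrac12\langle Az,z\rangle-\langle y,z\rangle$ is strictly convex and coercive. Its unique minimizer is obtained from the first-order condition $Az=y$, giving $z^*=A^{-1}y$. The minimal value is
$$
\tfrac12\langle y,A^{-1}y\rangle-\langle y,A^{-1}y\rangle=-\tfrac12\langle A^{-1}y,y\rangle .
$$
One can also see this directly from the identity
$$
\tfrac12\langle Az,z\rangle-\langle y,z\rangle=\tfrac12\langle A(z-A^{-1}y),z-A^{-1}y\rangle-\tfrac12\langle A^{-1}y,y\rangle ,
$$
which uses the symmetry of $A^{-1}$ and avoids any appeal to calculus. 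The condition $\min\ge-\varepsilon$ therefore reads $\langle A^{-1}y,y\rangle\le 2\varepsilon$. This proves both inclusions at once.

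There is no real obstacle; the only point needing care is the completing-the-square identity. That identity relies on the symmetry and invertibility of $A$, and it ensures the minimum is attained, so that the condition "for all $z$" is equivalent to the condition at the single point $z^*$. Symmetry of $A$ is assumed in the setting of \eqref{quadratic}. As a consistency check, taking $\varepsilon=0$ forces $y=0$, which recovers $\partial f(x)=\{Ax+b\}$.
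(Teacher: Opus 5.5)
Your proof is correct and takes the same route as the paper. Both arguments reduce the $\varepsilon$-subgradient inequality to $\frac{1}{2}\langle Az,z\rangle-\langle y,z\rangle+\varepsilon\ge 0$ for all $z$, find $z^*=A^{-1}y$ from the first-order condition, and evaluate there. Your completing-the-square identity, which shows that $z^*$ is the global minimizer, also proves the converse inclusion; the paper's proof, which only substitutes $z^*$, leaves that direction implicit.
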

\begin{proof}
	The gradient of the function $\varphi$ is $\nabla \varphi(x) = Ax+b$. We represent any $\varepsilon$-subgradient $\xi \in \partial_\varepsilon \varphi(x)$ as $\xi = \nabla \varphi(x) + y,~y \in \R^n$. It follows from the definition of $\varepsilon$-subgradients that
	$$
	\frac{1}{2}\Big[\langle Az,z \rangle-\langle Ax,x \rangle\Big]+\langle b, z-x \rangle \geq \langle Ax+b+y, z-x \rangle-\varepsilon, ~\forall z \in \R^n.
	$$
Simplifying this, we get
	$$
	\frac{1}{2} \Big[\langle Az,z \rangle+\langle Ax,x \rangle \Big]-\langle Ax,z \rangle \geq \langle y, z- x \rangle - \varepsilon.
	$$
	This implies that
	\begin{equation} \label{inequality}
	  \frac{1}{2} \langle A(z-x), z-x \rangle-\langle y, z-x \rangle+ \varepsilon \geq 0.
	\end{equation}
	Next we find a gradient of the left hand side with respect to $z$ and set it to zero:
	$$
	A(z^*-x) - y = 0.
	$$
	Since the matrix $A$ is invertible it follows from here that $z^* = x+ A^{-1}y$. It follows from \eqref{inequality} that
	$$
	-\frac{1}{2} \langle y, A^{-1}y \rangle + \varepsilon \geq 0.
	$$	
	From here we get $\langle A^{-1}y, y \rangle \leq 2\varepsilon.$ This completes the proof. 
\end{proof}

Proposition \ref{epssub01} demonstrates how to estimate $\varepsilon$-subdifferentials of convex quadratic functions.  
\begin{corollary} \label{eigenvect}
Consider the function
$$
\psi(x) = \frac{1}{2}\sum_{i=1}^n d_ix_i^2.
$$
The $\varepsilon$-subdifferential of the function $\psi$ at $x \in \R^n$ is:
\begin{align} \label{epsilonsub}
\partial_\varepsilon \psi(x)=\Big\{\xi \in \R^n:~\xi = (d_1x_1,\ldots, d_nx_n)+y,~\sum_{i=1, d_i >0}^n d_i^{-1} y_i^2 \leq 2\varepsilon,~y_i=0 ~\mbox{if}~d_i=0 \Big\}.
\end{align}
\end{corollary}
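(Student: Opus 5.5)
Note first that Proposition \ref{epssub01} cannot be applied verbatim: it assumes the matrix is positive definite and invertible, whereas the diagonal matrix $D=\mathrm{diag}(d_1,\ldots,d_n)$ may have zero entries ($d_i\ge 0$ only). So we argue directly from the definition, following the proof of Proposition \ref{epssub01}.

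Write $\xi=\nabla\psi(x)+y=(d_1x_1,\ldots,d_nx_n)+y$ and set $h=z-x$. Expanding $\psi(z)-\psi(x)-\langle\nabla\psi(x),h\rangle=\frac12\sum_i d_ih_i^2$ shows that $\xi\in\partial_\varepsilon\psi(x)$ if and only if
\begin{equation*}
 g(h):=\sum_{i=1}^n\Big(\tfrac12 d_ih_i^2-y_ih_i\Big)+\varepsilon\ \ge\ 0\quad\text{for all } h\in\R^n .
\end{equation*}
The function $g$ is separable, so we minimize each one-dimensional term independently. Let $I_0=\{i: d_i=0\}$ and $I_+=\{i:d_i>0\}$.

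For $i\in I_0$ the term equals $-y_ih_i$. If $y_i\neq 0$, we take $h_i=t\,\mathrm{sign}(y_i)$ with $t\to\infty$ and all other $h_j=0$; then $g(h)=\varepsilon-t|y_i|\to-\infty$, so the inequality fails. Hence $y_i=0$ is necessary for every $i\in I_0$, and once $y_i=0$ these coordinates contribute nothing.

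For $i\in I_+$ the term $\frac12 d_ih_i^2-y_ih_i$ is a strictly convex one-variable quadratic. It is minimized at $h_i=y_i/d_i$, where its value is $-\frac{y_i^2}{2d_i}$. This is exactly the computation $z^*=x+A^{-1}y$ from Proposition \ref{epssub01}, restricted to $I_+$. Therefore $\inf_h g(h)=\varepsilon-\frac12\sum_{i\in I_+}d_i^{-1}y_i^2$, and this infimum is nonnegative if and only if $\sum_{i\in I_+}d_i^{-1}y_i^2\le 2\varepsilon$.

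The two steps combine into an equivalence. Necessity follows from the two choices of $h$ above. Sufficiency holds because, under the stated conditions, $g(h)\ge\inf g\ge 0$ for every $h$. This gives the claimed description \eqref{epsilonsub}.

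The only delicate point is the degenerate case $d_i=0$, where invertibility fails. It is handled by the unboundedness argument in the $I_0$ step rather than by citing Proposition \ref{epssub01}.
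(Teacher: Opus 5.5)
Your proof is correct and is essentially the paper's route: the paper gives the corollary no separate proof and treats it as a specialization of Proposition \ref{epssub01}, whose argument is your computation (write $\xi=\nabla\psi(x)+y$, reduce to nonnegativity of $\frac12\sum_i d_ih_i^2-\langle y,h\rangle+\varepsilon$, and minimize at $h_i=y_i/d_i$). Your separable version is more complete, since Proposition \ref{epssub01} assumes invertibility and so does not cover coordinates with $d_i=0$, and its proof leaves sufficiency implicit; your unboundedness argument on $I_0$ and your explicit sufficiency step handle both.
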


\section{The proposed method} \label{method}
The proposed method for solving Problem \eqref{globprobQP1} is an hybrid method based on the local search method and a special procedure for escaping local solutions. We take any starting point point $x_0 \in \R^n$ and apply the local search method to find stationary point of the quadratic function. For example, the conjugate gradient method can be used to find such points. Then we apply the escaping procedure which is designed using $\varepsilon$-subdifferentials of the DC components. 

The escaping procedure finds the so-called global descent directions from stationary points which are not global minimizers. The notion of the global descent direction was introduced in \cite{BagJokMakTah2026}:

\begin{definition} \label{globdescent}
Let $x \in \R^n$ be a local minimizer of the problem \eqref{globprobQP1}. A direction $d \in \R^n, d \neq 0$ is called a global descent direction of the function $f$ at the point $x$ if there exist $\bar{\alpha}>0$ and $\delta \in
(0, \bar{\alpha})$ such that $f(x + \alpha d) < f(x)$ for all $\alpha \in (\bar{\alpha} - \delta, \bar{\alpha} + \delta)$.
\end{definition}
Note that global descent directions are defined at local minimizers. The following theorem was proved in \cite{BagJokMakTah2026}.
\begin{theorem} \label{globopt1}
    Let $f:\R^n \rightarrow \R$ be a DC function: $\varphi(x)=\varphi_1(x)-\varphi_2(x)$ where functions $\varphi_1, \varphi_2: \R^n \rightarrow \R$ are convex. Let $x \in \R^n$ be a local minimizer of $\varphi$, $\partial_\varepsilon \varphi_i(x)$ be $\varepsilon$-subdifferentials of the functions $\varphi_1$ and $\varphi_2$, respectively. Assume that $\partial_\varepsilon \varphi_2(x) \not\subset \partial_\varepsilon \varphi_1(x)$ for some $\varepsilon > 0$ and let $\xi_2 \in \partial_\varepsilon \varphi_2(x)$ be such that $\xi_2 \notin \partial_\varepsilon \varphi_1(x)$. Define $\xi_1 \in \partial_\varepsilon \varphi_1(x)$ such that
    $$
    \xi_1 = \argmin_{\xi \in \partial_\varepsilon \varphi_1(x)} \|\xi-\xi_2\|. 
    $$
    Then the direction $d=-(\xi_1-\xi_2)$ is a global descent direction at $x$.
\end{theorem}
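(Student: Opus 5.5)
We argue by separation and then use the $\varepsilon$-directional derivative formula \eqref{epsdir4} to convert a strict inequality between $\varepsilon$-directional derivatives into a strict decrease of $\varphi$ at some finite step along $d$.

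\emph{Step 1: separation.} Since $\partial_\varepsilon \varphi_1(x)$ is nonempty, convex and compact, the projection $\xi_1$ of $\xi_2$ onto it exists and is unique. Because $\xi_2\notin\partial_\varepsilon\varphi_1(x)$, we have $d=\xi_2-\xi_1\neq 0$. The projection characterization gives $\langle \xi-\xi_1, \xi_2-\xi_1\rangle \le 0$ for all $\xi\in\partial_\varepsilon\varphi_1(x)$. Hence
\[
\max_{\xi\in\partial_\varepsilon\varphi_1(x)}\langle \xi,d\rangle=\langle \xi_1,d\rangle<\langle \xi_1,d\rangle+\|d\|^2=\langle\xi_2,d\rangle .
\]
Using \eqref{epsdir3}, this yields $(\varphi_1)'_\varepsilon(x,d)<\langle \xi_2,d\rangle\le(\varphi_2)'_\varepsilon(x,d)$.

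\emph{Step 2: pick a step.} By \eqref{epsdir4} applied to $\varphi_1$, there is $\bar\alpha>0$ with
\[
\bar\alpha^{-1}\big[\varphi_1(x+\bar\alpha d)-\varphi_1(x)+\varepsilon\big]<\langle\xi_2,d\rangle .
\]
Since $\xi_2\in\partial_\varepsilon\varphi_2(x)$, the definition of the $\varepsilon$-subdifferential gives
\[
\varphi_2(x+\bar\alpha d)-\varphi_2(x)+\varepsilon\ge\bar\alpha\langle \xi_2,d\rangle .
\]
Combining the two inequalities, $\varphi_1(x+\bar\alpha d)-\varphi_1(x)<\varphi_2(x+\bar\alpha d)-\varphi_2(x)$, that is, $\varphi(x+\bar\alpha d)<\varphi(x)$. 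The key point is that the $\varepsilon$'s cancel. Note that this inequality holds in fact for every $\alpha>0$ satisfying the strict inequality of the first display, not only for $\bar\alpha$.

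\emph{Step 3: an open interval.} The functions $\varphi_1,\varphi_2$ are finite convex functions on $\R^n$, hence continuous. Therefore $\alpha\mapsto\varphi(x+\alpha d)$ is continuous, and the strict inequality $\varphi(x+\alpha d)<\varphi(x)$ persists on some interval $(\bar\alpha-\delta,\bar\alpha+\delta)$. Shrinking $\delta$ if necessary, we may take $\delta\in(0,\bar\alpha)$. This is exactly Definition \ref{globdescent}.

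The only delicate point is Step 2, since the infimum in \eqref{epsdir4} must be attained strictly below $\langle \xi_2,d\rangle$ at some finite positive $t$. This follows directly from the definition of the infimum, because the inequality is strict. In particular, no condition as $t\to 0$ is needed, and the local-minimizer hypothesis on $x$ is not used except to place the statement in the setting of Definition \ref{globdescent}.
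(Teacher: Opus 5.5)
Your proof is correct. The paper gives no argument of its own for this theorem; it imports it from \cite{BagJokMakTah2026}, so there is nothing in the paper to compare your route against.

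Your argument is self-contained and uses only the preliminaries already stated in Section~\ref{prelims}. The three steps are sound:
\begin{itemize}
\item Projecting $\xi_2$ onto $\partial_\varepsilon\varphi_1(x)$ gives the strict gap $(\varphi_1)'_\varepsilon(x,d)=\langle\xi_1,d\rangle=\langle\xi_2,d\rangle-\|d\|^2$.
\item The infimum formula \eqref{epsdir4} then produces a finite step $\bar\alpha$ with $\varphi_1(x+\bar\alpha d)-\varphi_1(x)+\varepsilon<\bar\alpha\langle\xi_2,d\rangle$. Against the $\varepsilon$-subgradient inequality for $\xi_2$, the two $\varepsilon$'s cancel.
\item Continuity supplies the interval required by the definition of a global descent direction.
\end{itemize}

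What your version adds beyond the bare citation:
\begin{itemize}
\item It makes explicit that local minimality of $x$ is never used. The conclusion holds at any point with $\xi_2\in\partial_\varepsilon\varphi_2(x)\setminus\partial_\varepsilon\varphi_1(x)$. This matters here because Algorithm~\ref{concepthybrid} and its convergence theorem are phrased in terms of stationary points rather than local minimizers.
\item It identifies the admissible step sizes exactly: they form the open set $\{t>0:\ \varphi_1(x+td)-\varphi_1(x)+\varepsilon<t\langle\xi_2,d\rangle\}$, which directly gives the required interval around $\bar\alpha$.
\item It can be read as a directional refinement of the sufficiency half of Theorem~\ref{hiriart}. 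Any $y$ witnessing $\xi_2\notin\partial_\varepsilon\varphi_1(x)$ already satisfies $\varphi(y)<\varphi(x)$. The role of the projection is to guarantee that such a witness can be found on the ray $\{x+td:\ t>0\}$.
\end{itemize}

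A minor wording point: in your last paragraph the infimum in \eqref{epsdir4} need not be ``attained''. What you need, and what you actually use, is only that some $t>0$ gives a value strictly below $\langle\xi_2,d\rangle$.
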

Consider the problem \eqref{globprobQP1}. According to Proposition \ref{DCquadratic} the objective function $f$ can be represented as a difference of two convex quadratic function: $f(x) = f_1(x) - f_2(x)$. Using the exact penalty function approach we can include box-constraints into the first DC component by writing 
$$
f_1(x) + \gamma \max\Big\{0, u_i - x_i, x_i - v_i:~i = 1,\ldots,n \Big\} 
$$
which is still convex. Here $\gamma > 0$ is the penalty parameter. We will use the same notation for the function $f(x)=f_1(x)-f_2(x)$ and assume that 
$$
f_* = \inf \Big \{f(x):~x \in \R^n \Big\} > -\infty.
$$
Now we are ready to describe the proposed method.
\begin{algorithm} [H]
	\caption{Hybrid method for global minimization of quadratic functions: conceptual version.} \label{concepthybrid}
	\smallskip
	\begin{algorithmic}[1]
	   \State\label{chmsinitialc} \textit{(Initialization)} Select the initial point $x_0 \in \R^n$. Set $\bar{x}_0 = x_0$ and $k=0$.
	   \smallskip
      \State\label{clocalc} \textit{(Local minimizer)} Apply a local method starting from $\bar{x}_k$ to find a local minimizer $x_k$.
	   \smallskip
	   \State\label{chsetcalc} For each $\varepsilon > 0$, compute the $\varepsilon$-subdifferentials $\partial_\varepsilon f_1(x_k)$ and $\partial_\varepsilon f_2(x_k)$. 
      \smallskip
      \State\label{globoptimality} If $\partial_\varepsilon f_2(x_k) \subseteq \partial_\varepsilon f_1(x_k)$, then STOP. The point $x_k$ is a global minimizer. 
      \smallskip
      \State\label{globdescent} Find $\varepsilon > 0$, $\bar{\xi}_{1k} \in \partial_\varepsilon f_1(x_k)$ and $\bar{\xi}_{2k} \in \partial_\varepsilon f_2(x_k)$ such that
      $$
       \|\bar{\xi}_{1k}-\bar{\xi}_{2k}\| = \max_{\xi_2 \in \partial_\varepsilon f_2(x_k)} \min_{\xi_2 \in \partial_\varepsilon f_2(x_k)} \|\xi_1-\xi_2\| >0
      $$
      \smallskip
      \State\label{linesearch} Construct the direction $\bar{d}_k=-(\bar{\xi}_{1k}-\bar{\xi}_{2k})$ and find $\bar{\alpha}_k>0$ such that
      $$
       \bar{\alpha}_k=\argmin_{\alpha>0}\Big\{f(x_k+\alpha d_k)-f(x_k) \Big\}.
      $$
      \smallskip
      \State\label{update} Set $\bar{x}_{k+1}=\x_k+\bar{\alpha}_k d_k$, $k=k+1$ and go to Step \ref{clocalc}.    
	\end{algorithmic}
\end{algorithm}

Next, we present the convergence result for Algorithm \ref{concepthybrid}.
\begin{theorem}
 Let $C$ be a nonempty set containing all stationary points of the problem \eqref{globprobQP1}. Assume that the set of objective function values
 $$
  V = \Big\{v \in \R:~~\exists~x \in C~\mbox{such~that}~v=f(x) \Big\}
 $$
 at these stationary points is finite. Then Algorithm \ref{concepthybrid} in a finite number of iterations finds a global minimizer of the problem \eqref{globprobQP1}.
\end{theorem}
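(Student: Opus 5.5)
The argument is a strict-monotonicity argument. First, the sequence of values $f(x_k)$ at the local minimizers produced in Step~\ref{clocalc} strictly decreases. Second, each value belongs to the finite set $V$. Hence only finitely many iterations are possible, and the algorithm can only terminate at Step~\ref{globoptimality}. By Theorem~\ref{hiriart}, that test certifies global optimality.

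The steps, in order, are as follows.

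\textbf{Step 1.} Every $x_k$ produced by Step~\ref{clocalc} is a local minimizer, hence a stationary point of the penalized problem. For $\gamma$ large enough (exact penalty), it is also stationary for problem \eqref{globprobQP1}, so $x_k \in C$ and $f(x_k)\in V$.

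\textbf{Step 2.} Suppose the algorithm does not stop at $x_k$. Then there is $\varepsilon>0$ with $\partial_\varepsilon f_2(x_k)\not\subseteq \partial_\varepsilon f_1(x_k)$, so the max–min in Step~\ref{globdescent} is positive. The pair $\bar\xi_{1k},\bar\xi_{2k}$ has $\bar\xi_{2k}\notin\partial_\varepsilon f_1(x_k)$, and $\bar\xi_{1k}$ is the projection of $\bar\xi_{2k}$ onto $\partial_\varepsilon f_1(x_k)$. This set is closed and convex; Proposition~\ref{epssub01} and Corollary~\ref{eigenvect} make it explicit for the quadratic parts. By Theorem~\ref{globopt1}, $\bar d_k$ is a global descent direction. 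Definition~\ref{globdescent} then gives some $\alpha>0$ with $f(x_k+\alpha \bar d_k)<f(x_k)$. The line-search minimizer $\bar\alpha_k$ exists because $f_*>-\infty$ and $f$ is continuous; if only an infimum is attained, any $\alpha$ achieving strict decrease suffices. Hence $f(\bar x_{k+1})<f(x_k)$.

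\textbf{Step 3.} The local method is a descent method started at $\bar x_{k+1}$, so $f(x_{k+1})\le f(\bar x_{k+1})<f(x_k)$. Thus $(f(x_k))$ is strictly decreasing with all terms in $V$. Since $V$ is finite, no value repeats, so the number of iterations is at most $|V|$. The algorithm must therefore stop at Step~\ref{globoptimality} for some $k$. At that point \eqref{hiriart1} holds for all $\varepsilon\ge 0$; the case $\varepsilon=0$ follows by differentiability or by taking limits. Theorem~\ref{hiriart} then shows $x_k$ is a global minimizer of the penalized function. Exact penalization transfers this to problem \eqref{globprobQP1}.

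The main obstacle is Step~3's requirement that the local method never increases $f$, together with the exact-penalty link between stationary points of the penalized and constrained problems. I would state these explicitly as standing assumptions on the local solver and on $\gamma$, namely $\gamma$ exceeding a Lipschitz bound of $f$ on a box containing the iterates. The remaining steps follow from Theorems~\ref{hiriart} and~\ref{globopt1}.
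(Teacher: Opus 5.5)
Your proof follows the paper's own argument: each iteration produces a stationary point whose objective value is strictly smaller than the previous one (via Theorem~\ref{globopt1} and the line search), so finiteness of $V$ forces termination at the optimality test of Step~\ref{globoptimality}, which certifies global optimality by Theorem~\ref{hiriart}. It is correct and more careful than the paper's version, because you make explicit the assumptions the paper uses tacitly: the local solver never increases $f$, and $\gamma$ is large enough for exact penalization to hold.
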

\begin{proof}
    Algorithm 1 finds a new stationary point at each iteration. According to Theorem \ref{globopt1}, the value of the objective function at this stationary point is strictly less than at the stationary point found in the previous iteration. Since the number of such values is finite and the function $f$ is bounded from below on compact sets the algorithm will find a global minimizer after a finite number of iterations.
\end{proof}




\section{Conclusions} \label{conclusions}
In this paper, we develop a new method for global minimization of quadratic functions subject to box-constraints. Using the Gershgorin theorem we represent the indefinite quadratic function as the difference of two convex quadratic functions. Next we formulate $\varepsilon$-subdifferentials of DC components and design a method for finding global minimizers of the quadratic functions. This method is based on the necessary and sufficient global optimality condition reformulated for quadratic functions. We discuss the convergence of the proposed method.

\vspace{1cm}

\noindent \textbf{Acknowledgment.} Juan Enrique Martínez-Legaz has been partially supported by Grant PID2022-136399NB-C22 from MICINN, Spain, and ERDF, ``A way to make Europe", European Union.

\bibliography{QP}
 
\end{document}